\newtheorem{thm}{Theorem}[section]
\newtheorem{rem}{Remark}
\newtheorem{lem}{Lemma}
\begin{document}
\title{\textbf{On rotationally symmetric K\"{a}hler-Ricci solitons}}
\author{Chi Li}
\date{}
\maketitle
\begin{abstract}
\noindent ABSTRACT: In this note, using Calabi's method, we
construct rotationally symmetric K\"{a}hler-Ricci solitons on the
total space of direct sum of fixed hermitian line bundle and its
projective compactification, where the curvature of hermitian line
bundle is K\"{a}hler-Einstein. These examples generalize the
construction of Koiso, Cao and Feldman-Ilmanen-Knopf.
\end{abstract}

\begin{section}{A little motivation}
In \cite{FIK}, the authors constructed some examples of gradient
K\"{a}hler-Ricci soliton. Among them is the shrinking soliton on the
$Bl_0\mathbb{C}^m$. They also glue this to an expanding soliton on
$\mathbb{C}^m$ to extend the Ricci flow across singular time.

Recently, La Nave and Tian \cite{NT} studied the formation of
singularity along K\"{a}hler-Ricci flow by symplectic quotient. The
idea is explained by the following example.

Let $\mathbb{C}^*$ act on $\mathbb{C}^{m+n}$ by
\[t\cdot (x_1,\cdots,x_m,y_1,\cdots,y_n)=(t\,x_1,\cdots, t\,x_m,t^{-1}\, y_1,\cdots,t^{-1}\, y_n)\]
$S^1\subset \mathbb{C}^*$ preserves the standard K\"{a}hler
structure on $\mathbb{C}^{m+n}$: \[\omega=\sqrt{-1}(\sum_{i=1}^m
dx_i\wedge d\bar{x}_i+\sum_{\alpha=1}^{n}dy_\alpha\wedge
d\bar{y}_\alpha)\] Let $z=(x,y)=(x_1,\cdots,x_m,y_1,\cdots,y_n)$.
The momentum map of this Hamiltonian action is
\[m(z)=\sum_{i=1}^m |x_i|^2-\sum_{\alpha=1}^n |y_\alpha|^2=|x|^2-|y|^2\]
The topology of symplectic quotient $X_a=m^{-1}(a)/S^1$ changes as
$a$ across 0.

Let $\mathcal{O}_{\mathbb{P}^N}(-1)$ be the tautological line bundle
on the complex projective space $\mathbb{P}^N$. We will use
$Y_{N,R}$ to represent the total space of holomorphic vector bundle
$(\mathcal{O}_{\mathbb{P}^N}(-1)^{\oplus R}\rightarrow
\mathbb{P}^N)$.
\begin{enumerate}
\item (a$>$0) $\forall z=(x,y)\in X_a$, $m(z)=|x|^2-|y|^2=a>0$, so $x\neq
0$.

$X_a\simeq Y_{m-1,n}\simeq
\{\mathbb{C}^{m+n}-\{x=0\}\}/\mathbb{C}^*$. The isomorphism is given
by
\[ (x_1,\cdots,x_m,y_1,\cdots, y_n)\mapsto ([x_1,\cdots,x_m], y_1\cdot x,\cdots, y_n\cdot
x)\] There is an induced K\"{a}hler metric on $X_a$. Choose a
coordinate chart $u_1=\frac{x_2}{x_1},\cdots,
u_{m-1}=\frac{x_m}{x_1}, \xi_1=x_1y_1,\cdots, \xi_n=x_1y_n$. The
$C^*$ action is then trivialized to: $(x_1,u,\xi)\mapsto (t
x_1,u,\xi)$. The K\"{a}hler potential can be obtained by some
Legendre transformation (see \cite{BG}). Specifically, the potential
for the standard flat K\"{a}hler metric on
$\{\mathbb{C}^{m+n}-\{x=0\}\}$ is
\[
\phi=|x|^2+|y|^2=|x_1|^2(1+|u|^2)+\frac{|\xi|^2}{|x_1|^2}=e^{r_1}(1+|u|^2)+e^{-r_1}|\xi|^2
\]
where $r_1=\log|x_1|^2$. $\phi$ is a convex function of $r_1$.
$a=\frac{\partial\phi}{\partial r_1}$ is the momentum map of the
$S^1$ action. In the induced coordinate chart $(u,\xi)$, the
K\"{a}hler potential of the induced metric on the symplectic
quotient is the Legendre transform of $\phi$ with respect to $r_1$:
\begin{equation}\label{phia}
\Phi_a=a\log(1+|u|^2)+\sqrt{a^2+4(1+|u|^2)|\xi|^2}-a\log(a+\sqrt{a^2+4(1+|u|^2|)|\xi|^2})+(\log2)\,a
\end{equation}

\item (a$<$0) By symmetry, $X_a\simeq Y_{n-1,m}\simeq\{\mathbb{C}^{m+n}-\{y=0\}\}/\mathbb{C}^*$. Choose a
coordinate chart $v_1=\frac{y_2}{y_1},\cdots,
v_{n-1}=\frac{y_n}{y_1}, \eta_1=y_1x_1,\cdots, \eta_m=y_1x_m$. The
K\"{a}hler potential has the same expression as (\ref{phia}) but
replacing $a$ by $-a$, $u$ by $v$, and $\xi$ by $\eta$.
\item (a$=$0) $X_a\cong \mbox{ affine cone over the Segre embedding
of } \mathbb{P}^{m-1}\times\mathbb{P}^{n-1}\hookrightarrow
\mathbb{P}^{mn-1}$:
\[(x_1,\cdots,x_m,y_1,\cdots,y_n)\mapsto \{x_iy_\alpha\}\]
Away from the vertex of the affine cone, choose a coordinate chart:
$u_1=\frac{x_2}{x_1},\cdots, u_{m-1}=\frac{x_m}{x_1},
v_1=\frac{y_2}{y_1},\cdots, v_{n-1}=\frac{y_n}{y_1}$,
$\zeta=x_1y_1$. The K\"{a}hler potential is given by
\[\Phi_0=2\sqrt{(1+|u|^2)(1+|v|^2)|\zeta|^2}\]
Note that $\Phi_0$ is obtained from $\Phi_a$ by coordinate change
$\xi_1=\zeta, \xi_2=v_1\zeta,\cdots,\xi_n=v_{n-1}\zeta$, and let $a$
tend to 0.
\end{enumerate}
This is a simple example of flip when $m\neq n$, or flop when $m=n$,
in the setting of symplectic geometry. $X_{<0}$ is obtained from
$X_{>0}$ by first blowing up the zero section $\mathbb{P}^{m-1}$,
and then blowing down the exceptional divisor
$E\cong\mathbb{P}^{m-1}\times\mathbb{P}^{n-1}$ to
$\mathbb{P}^{n-1}$. Note that when $n=1$, this process is just
blow-down of exceptional divisor in $Bl_0\mathbb{C}^m$.

\begin{figure}
\centering
\includegraphics{flip.1}
\end{figure}

One hopes to have a K\"{a}hler metric on a larger manifold
$\mathcal{M}$ such that induced K\"{a}hler metrics on symplectic
quotient would satisfy the K\"{a}hler-Ricci flow equation as the
image of momentum varies. See \cite{NT} for details.

Our goal to construct a K\"{a}hler-Ricci soliton on $Y=Y_{m-1,n}$
and its projective compactification, and this generalizes
constructions of \cite{Koi}, \cite{Cao} \cite{FIK}. The construction
follows these previous constructions closely, but we need to modify
them to fit our setting. The higher dimensional analogs have the new
phenomenon of contracting higher codimension subvariety to highly
singular point. To continue the flow, surgery are needed. The
surgeries in these cases should be the naturally appearing flips.

The organization of this note is as follows. In section 2, we put
the construction in a more general setting where the base manifold
is K\"{a}hler-Einstein, and state the main results: Theorem
\ref{KRsoliton} and Theorem \ref{cptshrink}. In section 3, by the
rotational symmetry, we reduce the K\"{a}hler-Ricci soliton equation
to an ODE. In section 4, we analyze the condition in order for the
general solution of the ODE to give a smooth K\"{a}hler metric near
zero section. In section 5.1, we get the condition for the metric to
be complete near infinity. In section 5.2, we prove theorem
\ref{KRsoliton}, i.e. construct K\"{a}hler-Ricci solitons in the
noncompact complete K\"{a}hler manifold and study its behavior as
time approaches the singular time. Finally in section 6, we prove
theorem \ref{cptshrink} by constructing the compact shrinking
soliton on projective compactification.

The author thanks: Professor Gang Tian for constant encouragement;
Professor Jian Song and Yuan Yuan for helpful discussions; Professor
Xiaohua Zhu for suggesting the author to study the compact shrinking
case; Zhou Zhang for his interest in these examples. He is
particularly grateful to Professor Jian Song for many insightful
suggestions and great encouragement.
\end{section}
\begin{section}{General setup and the result}
Let $M$ be a K\"{a}hler manifold of \textbf{dimension d}.
K\"{a}hler-Ricci soliton on $M$ is a K\"{a}hler metric $\omega$
satisfying the equation
\begin{equation}\label{soleq}
Ric(\omega)=\lambda\omega+L_V\omega
\end{equation}
where $V$ is a holomorphic vector field. The K\"{a}hler-Ricci
soliton is called gradient if $V=\nabla f$ for some potential
function $f$. If $\sigma(t)$ is the 1-parameter family of
automorphisms generated by $V$, then
\begin{equation}\label{pullback}
\omega(t)=(1-\lambda t)\sigma\left(-\frac{1}{\lambda}\log(1-\lambda
t)\right)^*\omega
\end{equation}
is a solution of K\"{a}hler-Ricci flow equation:
\[
\frac{\partial\omega(t)}{\partial t}=-Ric(\omega(t))
\]
We will construct gradient K\"{a}hler-Ricci solitons on the total
space of special vector bundle $ L^{\oplus n}\rightarrow M $ and its
projective compactification $\mathbb{P}(\mathbb{C}\oplus L^{\oplus
n})=\mathbb{P}(L^{-1}\oplus\mathbb{C}^{\oplus n})$. Here $M$ is a
K\"{a}hler-Einstein manifold:
\[
Ric(\omega_M)=\tau\omega_M
\]
$L$ has an Hermitian metric $h$, such that
\[
c_1(L,h)=-\sqrt{-1}\partial\bar{\partial}\log h=-\epsilon\omega_M
\]
In the following, we always consider the case $\epsilon\ge 0$.

We consider the K\"{a}hler metric of the form considered by Calabi
\cite{Ca}:
\begin{equation}\label{kafm}
\omega=\pi^*\omega_M+\partial\bar{\partial}P(s)
\end{equation}
Here $s$ is the norm square of vectors in $L$. Under local
trivialization of holomorphic local section $e_L$,
\[
s(\xi e_L)=a(z)|\xi|^2,\; \xi=(\xi^1,\cdots,\xi^n)
\]
$P$ is a smooth function of $s$ we are seeking for.

Using the form (\ref{kafm}), we can determine $\lambda$ immediately.
Let $M$ be the zero section. By adjoint formula,
\[
-K_Y|_{M}=-K_{M}+\wedge^nN_M|_{M}=-K_{M}+nL
\]
Note that $\omega|_{M}=\omega_{M}$, so by restricting both sides of
(\ref{soleq}) to $M$, and then taking cohomology, we see that
\begin{equation}\label{valam}
\tau[\omega_M]-n\epsilon[\omega_M]=c_1(Y)|_M=\lambda[\omega_M]
\end{equation}
So $\lambda=\tau-n\epsilon$.
\begin{rem}
If we rescale the K\"{a}hler-Einstein metric:
$\omega_M\rightarrow\kappa\omega_M$, then
$\tau\rightarrow\tau/\kappa$, $\epsilon\rightarrow\epsilon/\kappa$,
$\lambda\rightarrow\lambda/\kappa$.
\end{rem}
The main theorem is
\begin{thm}\label{KRsoliton}
On the total space of $L^{\oplus n}$, there exist rotationally
symmetric solitons of types depending on the sign of
$\lambda=\tau-n\epsilon$. If $\lambda>0$, there exists a unique
shrinking soliton. If $\lambda=0$, there exists a family of steady
solitons. If $\lambda<0$, there exists a family of expanding
solitons. (The solitons are rotationally symmetric in the sense that
it's of the form of \eqref{kafm})
\end{thm}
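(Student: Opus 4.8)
The plan is to apply Calabi's momentum construction: reduce the soliton equation \eqref{soleq} under the ansatz \eqref{kafm} to a single first-order ODE, integrate it explicitly, and then read off the three regimes from the behavior of the solution at the zero section and at infinity.

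First I would pass to the momentum variable. Setting $t=\log s$ and $\varphi=\frac{dP}{dt}=sP'(s)$, the function $\varphi$ is the moment map of the fiberwise $S^1$-action, so I treat $\varphi$ as the independent variable on its range $[0,\infty)$ and introduce the momentum profile $\phi(\varphi):=\frac{d\varphi}{dt}$. Expanding $\omega$ in a local trivialization and using $\sqrt{-1}\partial\bar{\partial}\log a=\epsilon\,\omega_M$, which encodes $c_1(L,h)=-\epsilon\omega_M$, the metric splits into a base part proportional to $(1+\epsilon\varphi)\,\omega_M$ and a fiber part governed by $\phi$; since $\epsilon\ge 0$ the base factor is automatically positive for $\varphi\ge 0$, so positivity of $\omega$ reduces to $\phi>0$ on $(0,\infty)$. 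Rotational symmetry also forces the holomorphic field to be the fiber-scaling field $V=\mu\sum_i\xi^i\partial_{\xi^i}$ for a soliton constant $\mu$, so that $\omega$ stays of the form \eqref{kafm} under the flow.

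Next I would match the two sides of \eqref{soleq}. Writing $Ric(\omega)=-\sqrt{-1}\partial\bar{\partial}\log\det g$ and factoring the determinant into its base contribution, which reproduces $Ric(\omega_M)=\tau\omega_M$ together with the $-n\epsilon\,\omega_M$ term and hence the relation $\lambda=\tau-n\epsilon$ of \eqref{valam}, and its fiber contribution built from $\varphi$ and $\phi$, the equality of $(1,1)$-forms collapses to a scalar equation. The decisive point, exactly as in the work of Koiso, Cao and Feldman-Ilmanen-Knopf, is that after one integration this equation is first order and linear in the profile $\phi(\varphi)$, with the soliton term contributing the weight $e^{\mu\varphi}$. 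I would solve it by an explicit integrating factor, of the schematic form $e^{\mu\varphi}(1+\epsilon\varphi)^{\dim M}\varphi^{n-1}$, obtaining $\phi(\varphi)$ as an explicit integral depending on $\mu$ and one constant of integration.

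Finally I would impose the two boundary requirements. Smoothness across the zero section $M$, where $\varphi\to 0$, forces $\phi(0)=0$ and $\phi'(0)=1$ so that the rank-$n$ fiber metric closes up smoothly; because the soliton term is subleading near $\varphi=0$, choosing the integral to start at $0$ fixes the integration constant and $\phi'(0)=1$ holds for every $\mu$, leaving a one-parameter family of solutions smooth along $M$ (this is the analysis of Section 4). The surviving parameter $\mu$ is then constrained by completeness near infinity (Section 5.1) together with global positivity $\phi>0$ on $(0,\infty)$. I expect this asymptotic analysis to be the main obstacle: one must control the large-$\varphi$ behavior of the explicit integral and show it stays positive with the correct divergence rate, and the argument changes type with the sign of $\lambda$ because the exponential weight $e^{\mu\varphi}$ interacts differently with the $\lambda$-term. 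A parameter count then yields the trichotomy: for $\lambda>0$ the positivity and completeness conditions at infinity pin down $\mu$ uniquely, giving a unique shrinking soliton, whereas for $\lambda=0$ and $\lambda<0$ an interval of values of $\mu$ remains admissible, producing the asserted families of steady and expanding solitons.
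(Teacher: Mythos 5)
Your plan follows the paper's own route essentially verbatim: the Calabi ansatz \eqref{kafm}, passage to the momentum profile, reduction to the first-order linear ODE \ref{ode}, solution by integrating factor, and the boundary condition at the zero section fixing the integration constant $\nu$ as a function of $\mu$ while leaving $\mu$ free (this is exactly Sections 3 and 4 of the paper, including the fact that $F(\phi)=\phi+O(\phi^2)$ holds for every $\mu$ once $\nu=\nu(\mu)$ is so chosen). Up to that point your outline is sound.

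The gap is in your last paragraph, where the trichotomy is asserted rather than proved: "a parameter count then yields the trichotomy" is not the mechanism, and the two ideas that actually deliver the result are absent. First, completeness enters quantitatively through the radial length \eqref{comp2}: the distance to infinity is $\int F^{-1/2}\,d\phi$, so for $\lambda>0$, if the exponential term survives ($\nu(\mu)>0$) this integral converges and the metric is incomplete, while $\nu(\mu)<0$ or $\mu<0$ forces a finite positive zero of $F$; such a zero must be excluded by a separate argument, which the paper gets by evaluating the ODE at a first zero ($F'(b_1)=n-\lambda b_1$) and showing that completeness would then force a double root $F(b_1)=F'(b_1)=0$, yielding a sign contradiction. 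The upshot is that the admissible $\mu$ are precisely the solutions of the transcendental-looking equation $\nu(\mu)=0$. Second, to show this equation has exactly one positive root --- which is what both existence and uniqueness of the shrinking soliton amount to --- the paper clears denominators to obtain a polynomial in $\mu$ whose coefficients $C_k$ in \eqref{Ck} change sign exactly once precisely when $\lambda>0$, and then invokes a Descartes-type sign lemma (Lemma \ref{rootlem}, proved by induction on derivatives); a parameter count gives neither the existence nor the uniqueness of this root, and one must still verify afterwards that the resulting $F$ is positive for all $\phi>0$ (the paper's $b_1\ge n/\lambda\ge b_2$ argument). Likewise, for $\lambda\le 0$ the admissible set is not some interval found by counting: one shows $\mu>0$ is impossible (again via \eqref{comp2}, since $\nu(\mu)>0$ there) and that every $\mu<0$ works, with $F\to -n/\mu$ when $\lambda=0$ and $F\sim(\lambda/\mu)\phi$ when $\lambda<0$, so the families are exactly the half-line $\mu<0$.
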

\begin{rem}
If we take $M=\mathbb{P}^{m-1}$, $L=\mathcal{O}(-1)$,
$\omega_M=\omega_{FS}$, then $\tau=m$, $\epsilon=1$. Then we get to
the situation in section 1. So depending on the sign of
$\lambda=m-n$, there exist either a unique rotationally symmetric
shrinking KR soliton when $m>n$, or a family of rotationally
symmetric steady KR solitons when m=n, or a family of rotationally
symmetric expanding KR solitons when $m<n$.
\end{rem}
We also have the compact shrinking soliton:
\begin{thm}\label{cptshrink}
Using the above notation, assume $\lambda=\tau-n\epsilon>0$, then on
the space $\mathbb{P}(\mathbb{C}\oplus L^{\oplus
n})=\mathbb{P}(L^{-1}\oplus\mathbb{C}^{\oplus n})$, there exists a
unique shrinking K\"{a}hler-Ricci soliton.
\end{thm}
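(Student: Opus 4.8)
The plan is to reuse the ODE reduction of Section 3 together with the smoothness analysis of Section 4, replacing the completeness-at-infinity condition of Section 5.1 by a \emph{second} smoothness condition at the divisor at infinity. The compactification $\mathbb{P}(\mathbb{C}\oplus L^{\oplus n})$ has two distinguished loci on which the rotational symmetry degenerates: the zero section $M$ (where $s=0$) and the divisor at infinity $D_\infty=\mathbb{P}(L^{\oplus n})\cong M\times\mathbb{P}^{n-1}$ (the limit $s\to\infty$). A metric of the Calabi form \eqref{kafm} extends smoothly across the whole compact manifold precisely when $P(s)$ satisfies appropriate boundary conditions at \emph{both} ends. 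So the first step is to re-examine the general solution of the soliton ODE and record, on one side, the conditions at $s=0$ already obtained in Section 4, and on the other the analogous conditions at $s=\infty$ that force the metric to close up smoothly on $D_\infty$.

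The second step is to pass to momentum coordinates: introduce the momentum variable $x$ (the Hamiltonian of the fiber $S^1$-action, $x=sP'(s)$ up to normalization, exactly as $a=\partial\phi/\partial r_1$ appeared in Section 1) and the momentum profile $\phi(x)$ encoding the fiber metric on an interval $x\in[x_0,x_\infty]$. In these coordinates the two smoothness requirements become clean endpoint conditions: $\phi>0$ on the open interval, $\phi$ vanishes at both endpoints, and $\phi$ has prescribed slopes there, fixed by the normal-bundle data of $M$ and $D_\infty$ (hence by $\epsilon$ and $n$). The soliton equation \eqref{soleq} with $V$ the fiber-scaling field becomes a first-order linear ODE for $\phi$ carrying one integration constant; the soliton constant (the coefficient of the Euler field in $V$) enters as a second parameter, while $\lambda=\tau-n\epsilon$ is fixed by \eqref{valam}.

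The decisive step is to solve this two-point boundary value problem. The conditions at the zero-section end pin down the integration constant as in Section 4, leaving the soliton constant and the far endpoint $x_\infty$ to be fixed by the two conditions at $D_\infty$ (vanishing of $\phi$ together with the prescribed slope). I would carry this out by a shooting/monotonicity argument: starting from the solution smooth at $M$, track $\phi$ as the soliton constant varies, define the mismatch function measuring the failure of the infinity-end conditions, and show it changes sign and is strictly monotone, so that exactly one value of the parameter makes the metric close up. Here the hypothesis $\lambda>0$ is essential: it guarantees that the relevant integral converges and that the far endpoint is reached at a finite momentum, which is precisely what fails (producing instead a complete noncompact end) when $\lambda\le 0$. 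This is why only the shrinking case compactifies.

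The main obstacle is this last matching. In the noncompact Theorem \ref{KRsoliton} one closes up only at the zero section and may leave the other end free, which is why whole families of solutions appear; compactness imposes a second transcendental endpoint condition. Establishing that the mismatch function has a \emph{unique} zero—existence together with uniqueness—is the heart of the argument, and the monotone dependence of $\phi$ on the shooting parameter, combined with $\lambda>0$, is what I expect to make it work.
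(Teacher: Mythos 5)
Your existence argument is essentially the paper's. The two closing conditions you describe are exactly the ones the paper derives: smoothness at the zero section forces $F(0)=0$, $F'(0)=1$ (Section 4), while smooth closing along $D_\infty$ forces $F(b_1)=0$, $F'(b_1)=-1$, which via \ref{ode} pins the far endpoint at $b_1=\frac{n+1}{\lambda}$ (so $\lambda>0$ is indeed what makes the endpoint finite and positive). After the zero-section condition fixes the integration constant $\nu$ in terms of $\mu$, the remaining condition is the one-parameter mismatch equation, the paper's \eqref{cpteq}, $T(\mu)=\int_0^{b_1}h(\phi)e^{-\mu\phi}d\phi=0$, and existence follows from a sign change ($T(0)>0$, $T(\mu)<0$ for $\mu\gg0$). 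Up to here your plan and the paper coincide; you only omit the explicit coordinate computation near $D_\infty$ verifying that these endpoint conditions really produce a smooth positive-definite metric there, which the paper carries out.

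The genuine gap is uniqueness. You propose to get it from strict monotonicity of the mismatch function and say you ``expect'' this to work; this is precisely the point the paper flags as ``difficult to see directly'' and sidesteps by invoking Tian--Zhu's uniqueness theory \cite{TZ} (different roots of \eqref{cpteq} would give proportional vector fields and potentials, and the Tian--Zhu invariant then forces uniqueness). Moreover, your claim as literally stated is false: $T$ is \emph{not} globally monotone. Since $h(\phi)=(1+\epsilon\phi)^d\phi^{n-1}(\lambda\phi-n)$ is negative near $\phi=0$, Watson's lemma gives $T(\mu)\sim -n!/\mu^{n}<0$ and $T'(\mu)\sim n\cdot n!/\mu^{n+1}>0$ as $\mu\to+\infty$, so $T$ eventually increases back toward $0$ from below. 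What does hold, and would rescue your shooting argument, is the weaker statement that $T'<0$ at every zero of $T$: if $T(\mu_0)=0$, then
\[
-T'(\mu_0)=\int_0^{b_1}\phi\,h(\phi)e^{-\mu_0\phi}d\phi
=\int_0^{b_1}\Bigl(\phi-\tfrac{n}{\lambda}\Bigr)h(\phi)e^{-\mu_0\phi}d\phi
=\lambda\int_0^{b_1}(1+\epsilon\phi)^d\phi^{n-1}\Bigl(\phi-\tfrac{n}{\lambda}\Bigr)^2e^{-\mu_0\phi}d\phi>0,
\]
so every zero is a strict downward crossing and hence there is at most one. You did not supply this (or any) argument, and it is the heart of the matter. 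Finally, even granting a unique root $\mu$, your route only yields uniqueness \emph{within the Calabi ansatz} \eqref{kafm}; the theorem asserts uniqueness of the shrinking soliton on $\mathbb{P}(\mathbb{C}\oplus L^{\oplus n})$ outright, and closing that gap is exactly what the paper's appeal to Tian--Zhu's theorem accomplishes and what a pure ODE shooting argument cannot.
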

\end{section}

\begin{section}{Reduction to ODE}
The construction of solitons is straightforward by reducing the
soliton equation to an ODE.

First, in local coordinates, \eqref{kafm} is expressed as
\begin{equation}\label{omega}
\omega=(1+\epsilon
P_ss)\omega_M+a(P_s\delta_{\alpha\beta}+P_{ss}a\overline{\xi^\alpha}\xi^\beta)
\nabla\xi^\alpha\wedge\overline{\nabla\xi^\beta}
\end{equation}
Here
\[
\nabla\xi^\alpha=d\xi^\alpha+a^{-1}\partial a\,\xi^\alpha
\]
Note that $\{dz^i,\nabla\xi^\alpha\}$ are dual to the basis
consisting of horizontal and vertical vectors:
\[\nabla_{z^i}=\frac{\partial}{\partial z^i}-a^{-1}\frac{\partial a}{\partial
z^i}\sum_\alpha\xi^\alpha\frac{\partial}{\partial\xi^\alpha},\;\;\frac{\partial}{\partial\xi^\alpha}\]

$\omega$ is positive if and only if
\begin{equation}\label{poscond}
1+\epsilon P_ss>0,\, P_s>0, \mbox{ and } P_s+P_{ss}s>0
\end{equation}
\[
\omega^{d+n}=(1+\epsilon
P_ss)^{d}\omega_M^da^nP_s^{n-1}(P_s+P_{ss}s)\prod_{\alpha=1}^n
d\xi^\alpha\wedge d\bar{\xi}^\alpha
\]
Since we assume
$Ric(\omega_M)=\tau\omega_M=(\lambda+n\epsilon)\omega_M$,
\[
\partial\bar{\partial}\log\det\omega^{d+n}+\lambda(\omega_M+\partial\bar{\partial}P)=\partial\bar{\partial}
\left[d\cdot\log(1+\epsilon P_ss)+(n-1)\log P_s+\log
((P_ss)_s)+(\tau-n\epsilon) P\right]
\]
Let $r=\log s$, then $\partial_r=s\partial_s$. Define
\begin{eqnarray}\label{Qfunc}
Q&:=&d\cdot\log(1+\epsilon P_ss)+(n-1)\log P_s+\log
((P_ss)_s)+(\tau-n\epsilon) P\nonumber\\
&=&d\cdot\log(1+\epsilon P_r)+(n-1)\log P_r+\log
P_{rr}-nr+(\tau-n\epsilon)P
\end{eqnarray}
To construct a gradient K\"{a}hler-Ricci soliton \eqref{soleq}, it
is sufficient to require that $Q(t)$ is a potential function for the
holomorphic vector field $-V$. Notice that, for the radial
holomorphic vector field:
\begin{equation}
V_{rad}=\sum_{\alpha=1}^n\xi^\alpha\frac{\partial}{\partial
\xi^\alpha}
\end{equation}
\[
i_{V_{rad}}\omega=(P_s+P_{ss}s)a\sum_\beta\xi^\beta
\overline{\nabla\xi^\beta}=(P_ss)_s\bar{\partial}s
\]
Now
\[
-i_V\omega=\bar{\partial}Q(s)=Q_s\bar{\partial}s=\frac{Q_s}{(P_ss)_s}i_{V_{rad}}\omega
\]
which means $-V=\frac{Q_s}{(P_ss)_s}V_{rad}$, so
$\frac{Q_s}{(P_ss)_s}$ is a holomorphic function. Since
$s=a(z)|\xi|^2$ is not holomorphic, $\frac{Q_s}{(P_ss)_s}$ has to be
a constant $\mu$. We assume $\mu\neq0$, since $V\neq 0$. So we get
the equation: $Q_s=\mu (P_ss)_s$. Multiplying by s on both sides,
this is equivalent to
\begin{equation}\label{soleq2}
Q_r=\mu P_{rr}
\end{equation}
\begin{rem}
Note that, if $\mu=0$, then we go back to Calabi's construction of
K\"{a}hler-Einstein metrics in \cite{Ca}.
\end{rem}
Define $\phi(r)=P_r(r)$ and substitute \eqref{Qfunc} into
\eqref{soleq2}, then we get
\begin{equation}\label{soleq3}
d\frac{\epsilon\phi_r}{1+\epsilon\phi}+(n-1)\frac{\phi_r}{\phi}+\frac{\phi_{rr}}{\phi_r}+(\tau-n\epsilon)\phi-n=\mu
\phi_r
\end{equation}
Since $\phi_r=P_{rr}=(P_ss)_ss=(P_s+P_{ss}s)s>0$ by (\ref{poscond}),
we can solve $r$ as a function of $\phi$: $r=r(\phi)$. Define
$F(\phi)=\phi_{r}(r(\phi))$, then
$F'(\phi)=\phi_{rr}r'(\phi)=\frac{\phi_{rr}}{\phi_r}$. So the above
equation change into an ODE
\begin{align}
F'(\phi)+d\frac{\epsilon
F(\phi)}{1+\epsilon\phi}+(n-1)\frac{F(\phi)}{\phi}-\mu
F(\phi)=n-(\tau-n\epsilon)\phi=n(1+\epsilon\phi)-\tau\phi\tag*{$(*)$}\label{ode}
\end{align}
\begin{rem}\label{relate}
We will explain how this equation is related to the ODE in
\cite{FIK},(25). In our notation, in \cite{FIK}, $M=\mathbb{P}^{d}$,
$L=O_{\mathbb{P}^d}(-k)$, $n=1$. For the shrinking soliton case,
$d+1-k>0$, $\omega_M=(d+1-k)\omega_{FS}$, $\tau=\frac{d+1}{d+1-k}$,
$\epsilon=\frac{k}{d+1-k}$, $\lambda=\tau-\epsilon=1$. Let
$r=k\tilde{r}$,
$P(r)=\tilde{P}(\tilde{r})-(d+1-k)\tilde{r}=\tilde{P}(\frac{r}{k})-\frac{d+1-k}{k}r$,
$\phi(r)=P_r(r)=\tilde{P}_{\tilde{r}}(\tilde{r})\frac{1}{k}-\frac{d+1-k}{k}=\frac{1}{k}(\tilde{\phi}(\tilde{r})-(d+1-k))$,
$F(\phi)=\phi_r(r(\phi))=\frac{1}{k^2}\tilde{\phi}_{\tilde{r}}(\tilde{r}(\tilde{\phi}))=\frac{1}{k^2}\tilde{F}(\tilde{\phi})$,
$F'_\phi(\phi)=\frac{1}{k}\tilde{F}'_{\tilde{\phi}}(\tilde{\phi})$.
Substitute these expressions into \ref{ode}, then we get the ODE
\[
\tilde{F}'_{\tilde{\phi}}+\left(\frac{d}{\tilde{\phi}}-\frac{\mu}{k}\right)\tilde{F}-((d+1)-\tilde{\phi})=0
\]
So we see this is exactly the ODE in \cite{FIK}, (25). The expanding
soliton case is the similar.
\end{rem}
We can solve \ref{ode} by multiplying the integral factor:
$(1+\epsilon\phi)^{d}\phi^{n-1}e^{-\mu\phi}$:
\begin{equation}\label{phir}
\phi_r=F(\phi)=\nu(1+\epsilon\phi)^{-d}\phi^{1-n}e^{\mu\phi}-(1+\epsilon\phi)^{-d}\phi^{1-n}e^{\mu\phi}\int
h(\phi)e^{-\mu\phi}d\phi
\end{equation}
where
\begin{equation}\label{hpoly}
h(\phi)=\tau(1+\epsilon\phi)^{d}\phi^n-n(1+\epsilon\phi)^{d+1}\phi^{n-1}
\end{equation}
is a polynomial of $\phi$ with degree $d+n$.  Note the identity:
\[
\int
h(\phi)e^{-\mu\phi}d\phi=-\sum_{k=0}^{+\infty}\frac{1}{\mu^{k+1}}h^{(k)}(\phi)e^{-\mu\phi}
\]
Since $h(\phi)$ is a polynomial of degree d+n, the above sum is a
finite sum. So
\begin{equation}\label{Fphi}
F(\phi)=(1+\epsilon\phi)^{-d}\phi^{1-n}\left(\nu
e^{\mu\phi}+\sum_{k=0}^{d+n}\frac{1}{\mu^{k+1}}h^{(k)}(\phi)\right)
\end{equation}
\end{section}

\begin{section}{Boundary condition at zero section}
Since
$\lim_{r\rightarrow-\infty}\phi(r)=\lim_{s\rightarrow0}P_ss=0$, we
have the boundary condition
\begin{equation}\label{bdry1}
\lim_{\phi\rightarrow0}F(\phi)=\lim_{r\rightarrow-\infty}\phi_r=\lim_{s\rightarrow0}(P_ss)_ss=0
\end{equation}
So $\phi^{n-1}(1+\epsilon\phi)^{d}F(\phi)=O(\phi^n)$. Now the $l$-th
term of Taylor expansion of $\phi^{n-1}(1+\epsilon\phi)^{d}F(\phi)$
at $\phi=0$ is
\begin{equation}
\left.(\phi^{n-1}(1+\epsilon\phi)^{d}F(\phi))^{(l)}\right|_{\phi=0}=\nu\mu^l+\sum_{k=0}^{+\infty}\frac{1}{\mu^{k+1}}h^{(k+l)}(0)=\mu^{l}\left(\nu+\sum_{k=l}^{d+n}\frac{1}{\mu^{k+1}}h^{(k)}(0)\right)
\end{equation}
Note that by \eqref{hpoly} $h^{(k)}(\phi)=0$ for $k>d+n$, and
$h^{(k)}(0)=0$ for $k<n-1$. The vanishing of the 0-th(constant) term
in expansion gives the equation:
\begin{equation}\label{numu1}
\nu+\sum_{k=n-1}^{d+n}\frac{1}{\mu^{k+1}}h^{(k)}(0)=0
\end{equation}
Using relation (\ref{numu1}) we see that, when $l<n$,
$\left.(\phi^{n-1}(1+\epsilon\phi)^{d}F(\phi))^{(l)}\right|_{\phi=0}=0$,
and
\[\left.(\phi^{n-1}(1+\epsilon\phi)^{d}F(\phi))^{(n)}\right|_{\phi=0}=\mu^{n}\left(\nu+\sum_{k=n}^{d+n}\frac{1}{\mu^{k+1}}h^{(k)}(0)\right)=-h^{(n-1)}(0)=n!>0\]
So we see that (\ref{bdry1}) and (\ref{numu1}) are equivalent, and
if they are satisfied,
\[
\phi^{n-1}(1+\epsilon\phi)^{d}F(\phi)=\phi^n+O(\phi^{n+1}),\;\mbox{or}\;
F(\phi)=\phi+O(\phi^2)
\]
So $F(\phi)>0$ for $\phi$ near 0.

We can rewrite the relation (\ref{numu1}) more explicitly:
\begin{eqnarray}\label{numu}
\nu&=&\left.
\sum_{k=0}^{+\infty}\frac{1}{\mu^{k+1}}\left[n((1+\epsilon\phi)^{d+1}\phi^{n-1})^{(k)}-\tau((1+\epsilon\phi)^{d}\phi^n)^{(k)}\right]\right|_{\phi=0}\nonumber\\
&=&\sum_{k=n-1}^{d+n}\frac{1}{\mu^{k+1}}\left(n\binom{k}{n-1}(n-1)!\frac{(d+1)!}{(d+n-k)!}\epsilon^{k-n+1}-\tau\binom{k}{n}n!\frac{d!}{(d+n-k)!}\epsilon^{k-n}\right)\nonumber\\
&=&\sum_{k=n-1}^{d+n}C_k\frac{1}{\mu^{k+1}}\epsilon^{k-n}
\end{eqnarray}
Here
\begin{equation}\label{Ck}
C_k=\frac{k!d!}{(k-n+1)!(d+n-k)!}(n\epsilon(d+1)-\tau(k-n+1))
\end{equation}
\[
C_{n-1}=n!\epsilon,\quad C_{d+n}=-(d+n)!(\tau-n\epsilon)
\]
So, when $k$ starts from $n-1$ to $d+n$, $C_k$ change signs from
positive to negative if and only if $\lambda=\tau-n\epsilon>0$. We
need the following simple lemma later.
\begin{lem}\label{rootlem}
Let $P(x)=\sum_{i=0}^la_ix^i-\sum_{j=l+1}^{N}a_jx^j$ be a polynomial
function. Assume $a_i>0$ for $0\le a_i\le N$. Then there exists a
unique root for $P(x)$ on $[0,\infty)$.
\end{lem}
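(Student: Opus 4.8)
The plan is to get existence from the intermediate value theorem and uniqueness from a monotonicity argument after factoring out the power $x^l$ that separates the positive block of coefficients from the negative one. (Here I read the hypothesis as $a_i>0$ for all $0\le i\le N$, and I assume the negative block is nonempty, i.e. $N>l$, which is the case used in \eqref{numu}.)

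First I would record the two endpoint behaviors. Since $P(0)=a_0>0$, the origin is not a root, and the leading term $-a_Nx^N$ dominates for large $x$, so $P(x)\to-\infty$ as $x\to+\infty$. Continuity of $P$ then produces at least one root in $(0,\infty)$. For uniqueness the key observation is to pass to
\[
h(x)=\frac{P(x)}{x^l}=\sum_{i=0}^l a_i x^{\,i-l}-\sum_{j=l+1}^N a_j x^{\,j-l},\qquad x>0.
\]
Every exponent here is negative for $i<l$, zero for $i=l$, and positive for $j>l$, and in each case differentiation contributes a strictly negative amount: the terms with negative exponent give $a_i(i-l)x^{\,i-l-1}<0$, the constant term $a_l$ differentiates to $0$, and the terms with positive exponent give $-a_j(j-l)x^{\,j-l-1}<0$. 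Hence $h'(x)<0$ for all $x>0$, so $h$ is strictly decreasing on $(0,\infty)$.

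Since $h(x)\to+\infty$ as $x\to0^+$ (from the $a_0x^{-l}$ term when $l\ge1$, or simply $h(0)=a_0>0$ when $l=0$) and $h(x)\to-\infty$ as $x\to+\infty$, strict monotonicity forces $h$ to have exactly one zero $x_0>0$. Because $P(x)=x^l h(x)$ with $x^l>0$ for $x>0$, the positive zeros of $P$ coincide with those of $h$, giving a unique root on $(0,\infty)$; combined with $P(0)=a_0>0$, this yields a unique root on $[0,\infty)$. I expect the only delicate point to be the sign bookkeeping of the exponent $i-l$ across the two blocks, together with treating $l=0$ separately (where $P$ is already strictly decreasing). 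An equally valid but less self-contained alternative is Descartes' rule of signs: the coefficient sequence changes sign exactly once, so $P$ has exactly one positive root, and $P(0)=a_0\neq0$ excludes the origin. I would present the elementary monotonicity argument to keep the note self-contained.
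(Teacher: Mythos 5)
Your proof is correct, but it takes a genuinely different route from the paper. The paper gets existence the same way you do (from $P(0)=a_0>0$ and $P(x)\to-\infty$), but for uniqueness it argues by contradiction with an induction on derivatives: if $P$ had two roots on $[0,\infty)$, then by Rolle's theorem (together with the sign pattern $P(0)>0$, $P\to-\infty$, which forces a third root or a multiple root) $P'$ would have at least two roots there; since $P'$ has the same positive-block-then-negative-block coefficient structure, the argument iterates until one reaches a derivative whose coefficients are all negative and which therefore has no root on $[0,\infty)$ at all — a contradiction. Your argument instead factors $P(x)=x^l h(x)$ and shows $h'(x)<0$ termwise on $(0,\infty)$, so uniqueness follows from strict monotonicity with no root-counting needed. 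Each approach has its merits: your monotonicity argument is more self-contained and sidesteps the one delicate point in the paper's proof — the claim that two roots of $P$ force two roots of $P'$, which the paper dismisses as ``easy to see'' but which actually requires tracking multiplicities and the parity of sign crossings (and the paper's final step also has a harmless indexing slip, since it is $P^{(l+1)}$, not $P^{(l)}$, that has only negative coefficients). The paper's inductive scheme, on the other hand, is exactly the classical skeleton behind Descartes' rule of signs — which you correctly identify as the one-line alternative — and it generalizes directly to coefficient sequences with more than one sign change, whereas your division trick is tailored to the single sign change at index $l$. Your explicit handling of the degenerate cases ($N>l$ required, and $l=0$ treated separately) is also slightly more careful than the paper's statement, which tacitly assumes the negative block is nonempty.
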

\begin{proof}
First $P(0)=a_0>0$. Since $a_N<0$, when $x$ is large enough
$P(x)<0$. So there exists at least one root on $[0,\infty)$. Assume
there are more than one root, than it's easy to see that $P'(x)$ has
at least two roots on $[0,\infty)$. Note that $P'(x)$ has the same
form as $P(x)$, so $P''(x)$ has at least two roots on $[0,\infty)$.
By induction, $P^{(l)}(x)$ has at least two roots on $[0,\infty)$,
but $P^{(l)}(x)$ has only negative coefficients, so it has no root
at all. This contradiction proves the lemma.
\end{proof}
\end{section}
\begin{section}{Complete noncompact case}
We prove theorem \ref{KRsoliton} in this section.
\begin{subsection}{Condition at infinity}
As $\phi\rightarrow+\infty$,
\begin{equation}\label{infasym}
F(\phi)=\nu(1+\epsilon\phi)^{-d}\phi^{1-n}e^{\mu\phi}+\frac{\tau-n\epsilon}{\mu}\phi+O(1)
\end{equation}

Let $\phi=b_1>0$ be the first positive root for $F(\phi)=0$, then
$F'(b_1)\le0$. By \ref{ode}, $F'(b_1)=n-(\tau-n\epsilon)b_1$. So if
$\lambda=\tau-n\epsilon\le0$, there exits no such $b_1$. If
$\lambda=\tau-n\epsilon>0$, we integrate (\ref{phir}) to get
\begin{equation}\label{rphi}
r=r(\phi)=\int_{\phi_0}^{\phi}\frac{1}{F(u)}du+r(\phi_0)
\end{equation}
then the metric is defined for $-\infty<r<r(b_1)$. 

We can also calculate the length of radial curve extending to
infinity. In a fixed fibre, the radial vector
\[
\frac{\partial}{\partial
r}=\frac{1}{2}|\xi|\frac{\partial}{\partial|\xi|}=\frac{1}{2}\sum_{\alpha=1}^n\xi^\alpha\frac{\partial}{\partial
\xi^\alpha}=\frac{1}{2}V_{rad}
\]
\[
\left|\frac{\partial}{\partial
r}\right|^2=\frac{1}{4}g_\omega(V_{rad},V_{rad})=C(P_ss+P_{ss}s^2)=C\phi_r
\]
The completeness implies that the length of the radial curve
extending to infinity is infinity:
\begin{equation}\label{comp2}
\int_{-\infty}^{r(b_1)}\sqrt{\phi_r}dr=\int_{0}^{b_1}
\sqrt{\phi_r}\phi_r^{-1}d\phi=\int_{0}^{b_1}\phi_r^{-\frac{1}{2}}d\phi=\int_{0}^{b_1}F(\phi)^{-\frac{1}{2}}d\phi=+\infty
\end{equation}
If $0<b_1<+\infty$,(\ref{comp2}) means
$F(\phi)=c(\phi-b_1)^2+O((\phi-b_1)^3)$, i.e. $F'(b_1)=F(b_1)=0$.

But this can't happen: $b_1=\frac{n}{\tau-n\epsilon}$, and
$c=-(\tau-n\epsilon)$. First we have $b_1>0$. Second, $c\ge 0$ since
$F(\phi)>0$ when $\phi<b_1$. But they contradict with each other.

In conclusion, there can't be any finite value positive root for
$F(\phi)$.
\end{subsection}
\begin{subsection}{Existence and asymptotics}
\begin{enumerate}
\item ($\lambda=\tau-n\epsilon>0$) The solution is a shrinking
K\"{a}hler-Ricci soliton. If $\mu<0$, then when $\phi$ becomes
large, the dominant term in $F(\phi)$(\ref{infasym}) is
$\frac{\lambda}{\mu}\phi<0$, so there exists $0<b<+\infty$ such that
$F(b)=0$. But this is excluded by former discusions. So we must have
$\mu>0$.

If $\nu<0$ the dominant term is $\nu
\phi^{1-n}(1+\epsilon\phi)^{-m}e^{\mu\phi}<0$, so there is
$0<b<+\infty$, such that $F(b)=0$. Again, this is impossible. If
$\nu>0$, when $\phi$ becomes large, the dominant term is
$\nu\phi^{1-d-n}e^{\mu\phi}$,
\[
\int_{\phi_0}^{+\infty}F(s)^{-\frac{1}{2}}ds\leq
C\int_{\phi_0}^{+\infty}\nu^{-\frac{1}{2}}\phi^{\frac{d+n-1}{2}}e^{-\frac{\mu}{2}\phi}d\phi<+\infty
\]
This contradicts (\ref{comp2}). So we must have $\nu=0$. This gives
us an equation for $\mu$ via (\ref{numu}). Since when
$\lambda=\tau-n\epsilon>0$, $C_k$ change signs exactly once, by
lemma \ref{rootlem}, there exists a unique $\mu$ such that
$\nu(\mu)=0$ in (\ref{numu}). 

We now verify this $\mu$ guarantees
the positivity of $\phi_r$. Since the dominant term in
(\ref{infasym}) is $\frac{\lambda}{\mu}\phi>0$,
$F(\phi)\stackrel{\phi\rightarrow+\infty}{\longrightarrow}+\infty$.
We have also $F(\phi)>0$ for $\phi$ near 0. If $\phi=b_1>0$ is the
first root and $\phi=b_2>0$ is the last root of $F(\phi)$, then
$b_1\le b_2$, and
\[
F'(b_1)=-\lambda b_1+n\le0,\;\; F(b_2)=-\lambda b_2+n\ge0
\]
So $b_1\ge\frac{n}{\lambda}\ge b_2$, this implies $b_1=b_2$ and
$F'(b_1)=0$. We have ruled out this possibility before. In
conclusion, $F(\phi)>0$ for all $\phi>0$, or equivalently $\phi_r>0$
for all $r$.

Now in \eqref{infasym}, $\phi=\frac{\tau-n\epsilon}{\mu}\phi+O(1)$ as $\phi\rightarrow+\infty$, so by \eqref{rphi}, the maximum value of $r$ defined for the solution is 
\begin{equation}\label{comp1}
r_{max}=\int_0^{+\infty}\frac{1}{F(u)}du=+\infty
\end{equation}

So we already get the soliton on the whole manifold. In the following, we study the limit
of flow as time approaches singularity time.

Define $p=\frac{\lambda}{\mu}=\frac{\tau-n\epsilon}{\mu}$,
\[
r(\phi)-r(\phi_0)=\int_{\phi_0}^\phi\frac{du}{F(u)}=\int_{\phi_0}^\phi\frac{du}{pu}+\int_{\phi_0}^\phi\frac{pu-F(u)}{puF(u)}ds=\frac{1}{p}(\log\phi-\log\phi_0)+G(\phi_0,\phi)
\]
\begin{equation}\label{phi}
\phi(r)=\phi_0e^{-pr(\phi_0)}e^{-G(\phi_0,\phi)}e^{pr}=\phi_0e^{-pr(\phi_0)}e^{-G(\phi_0,\phi(r))}s^{p}
\end{equation}
The holomorphic vector field
$-\frac{V}{2}=\frac{\mu}{2}\sum_\alpha\xi^\alpha\frac{\partial}{\partial\xi^\alpha}$
generates the 1-parameter family of automorphisms:
$\sigma(\tilde{t})\cdot(u,\xi)=(u,e^{\frac{\tilde{t}\mu}{2}}\xi)$.
Let
\begin{equation}\label{tildet}
\tilde{t}(t)=-\frac{1}{\lambda}\log(1-\lambda t)
\end{equation}
\begin{eqnarray*}
\lim_{t\rightarrow\frac{1}{\lambda}}(1-\lambda
t)\sigma(\tilde{t})^*\phi
&=&\phi_0e^{-pr(\phi_0)}e^{-\lim_{\phi\rightarrow+\infty}G(\phi_0,\phi)}\lim_{t\rightarrow\frac{1}{\lambda}}(1-\lambda
t)((1-\lambda t)^{-\frac{\mu}{\lambda}}s)^p\\
&=&\phi_0e^{-pr(\phi_0)}e^{-G(\phi_0,+\infty)}s^p=D_0 s^p
\end{eqnarray*}
So
\[
\lim_{t\rightarrow\frac{1}{\lambda}}(1-\lambda
t)\sigma(\tilde{t})^*\phi_r=\lim_{t\rightarrow\frac{1}{\lambda}}(1-\lambda
t)\sigma(\tilde{t})^*F(\phi)=pD_0s^p
\]
(\ref{omega}) can be rewritten as
\begin{equation}
\omega=(1+\epsilon\phi)\pi^*\omega_{FS}+(\phi|\xi|^{-2}\delta_{\alpha\beta}+(\phi_r-\phi)|\xi|^{-4}\overline{\xi^\alpha}\xi^\beta)\nabla
\xi^\alpha\wedge\overline{\nabla\xi^\beta}
\end{equation}
\begin{eqnarray*}
\lim_{t\rightarrow\frac{1}{\lambda}}(1-\lambda
t)\sigma(\tilde{t})^*\omega&=&D_0\left[s^{p}\epsilon\omega_M
+s^{p}(|\xi|^{-2}\delta_{\alpha\beta}+(p-1)|\xi|^{-4}\overline{\xi^\alpha}\xi^\beta)\nabla\xi^\alpha\wedge\overline{\nabla\xi^\beta}\right]\\
&=&D_0\partial\bar{\partial}\left(\frac{1}{p}s^p\right)
\end{eqnarray*}
\begin{rem}
One sees that as $t\rightarrow \frac{1}{\lambda}$, the flow shrinks
the base (zero section of the vector bundle). In the model case,
$M=\mathbb{P}^{m-1}$, $L=\mathcal{O}(-1)$, the flow contracts the
manifold to the affine cone of the Segre embedding
$\mathbb{P}^{m-1}\times\mathbb{P}^{n-1}\hookrightarrow\mathbb{P}^{mn-1}$.
This is the same phenomenon as that appears for the symplectic
quotients at the beginning of this note.
\end{rem}
\item ($\lambda=\tau-n\epsilon=0$) the solution is a steady K\"{a}hler-Ricci soliton.
\[
F(\phi)=\nu(1+\epsilon\phi)^{-d}\phi^{1-n}e^{\mu\phi}-n(1+\epsilon\phi)^{-d}\phi^{1-n}\sum_{k=0}^{d+n-1}\frac{1}{\mu^{k+1}}\left((1+\epsilon\phi)^{d}\phi^{n-1}\right)^{(k)}
\]
If $\mu>0$, then $v(\mu)>0$ by (\ref{numu}). So the dominant term in
(\ref{infasym}) is $\nu(1+\epsilon\phi)^{-d}\phi^{1-n}e^{\mu\phi}$, again this would contradict 
\eqref{comp2}.

So $\mu<0$ and the dominant term in (\ref{Fphi}) is the constant
term $-\frac{n}{\mu}>0$. As $\phi\rightarrow+\infty$,
\[
F(\phi)=-\frac{n}{\mu}-\frac{n(d+n-1)}{\mu^2}\frac{1}{\phi}+O\left(\frac{1}{\phi^2}\right)=c_1-c_2\frac{1}{\phi}+O\left(\frac{1}{\phi^2}\right)
\]
So by \eqref{rphi}, $r_{max}=+\infty$ and the soliton is defined on the whole manifold. 

For the asymptotic behavior, let
\[
\int\frac{du}{c_1-\frac{c_2}{u}}=\frac{1}{c_1}u+\frac{c_2}{c_1^2}\log(c_1u-c_2)=R(u)
\]
\begin{eqnarray*}
r(\phi)-r(\phi_0)&=&\int_{\phi_0}^{\phi}\frac{du}{F(u)}=\int_{\phi_0}^\phi\frac{du}{c_1-\frac{c_2}{u}}+\int_{\phi_0}^\phi\left(\frac{1}{F(u)}-\frac{1}{c_1-\frac{c_2}{u}}\right)du\\
&=&R(\phi)-R(\phi_0)+G(\phi_0,\phi)
\end{eqnarray*}
Since $c_1>0$ and $c_2>0$ ($\mu<0$, $d>0$, $n\ge1$), $R(u)$ is an
increasing function for $u\gg0$, and has an inverse function denoted
by $R^{-1}$. Let
$\tilde{G}(r)=-G(\phi_0,\phi(r))+R(\phi_0)-r(\phi_0)$, then
$\tilde{G}$ is a bounded smooth function of $r$. We have
\[
\phi(r)=R^{-1}(r+\tilde{G}(r))
\]
The condition (\ref{comp2}) is always satisfied. There is a family
of steady K\"{a}hler-Ricci solitons.
\begin{rem}
If we let $d=0$, then we get expanding solitons on $\mathbb{C}^n$.
\eqref{numu1} becomes $\nu\mu^n=n!$.  The equation becomes
\[
(|\mu|\phi)_r=(-1)^nn!(|\mu|\phi)^{1-n}e^{-|\mu|\phi}+n!\sum_{k=0}^{n-1}\frac{(-1)^{k}}{(n-1-k)!(|\mu|\phi)^k}
\]
In particular, if $n=1$, the equation becomes
\[
\phi_r=F(\phi)=\nu e^{\mu\phi}-\frac{1}{\mu}
\]
\[
\phi(r)=-\frac{1}{\mu}\log(\mu\nu+Ce^{r})=-\frac{1}{\mu}\log(1+C|z|^2),\quad
\phi_r=-\frac{C|z|^2}{\mu(1+C|z|^2)}
\]
\[
\omega=\frac{\phi_r}{|z|^2}dz\wedge d\bar{z}=-\frac{Cdz\wedge
d\bar{z}}{\mu(1+C|z|^2)}\stackrel{w=\sqrt{C}z}{=}\frac{1}{-\mu}\frac{dw\wedge
d\bar{w}}{1+|w|^2}
\]
This is cigar steady soliton.
\end{rem}

\item ($\lambda=\tau-n\epsilon<0$) the solution is an expanding K\"{a}hler-Ricci soliton. By
similar argument, we see that $\mu<0$. The situation is similar to
the shrinking soliton case. Now $t\rightarrow\frac{1}{\lambda}<0$,
or equivalently $\tilde{t}\rightarrow-\infty$ (\ref{tildet}),
\[
\phi(r)=\phi_0e^{-pr(\phi_0)}e^{-G(\phi_0,\phi(r))}s^{p}
\]
\[
\lim_{t\rightarrow\frac{1}{\lambda}}(1-\lambda
t)\sigma(\tilde{t})^*\phi_r=\lim_{t\rightarrow\frac{1}{\lambda}}(1-\lambda
t)\sigma(\tilde{t})^*F(\phi)=pD_0s^p
\]
\[
\lim_{t\rightarrow\frac{1}{\lambda}}(1-\lambda
t)\sigma(\tilde{t})^*\omega=D_0\partial\bar{\partial}\left(\frac{1}{p}s^p\right)
\]
The condition (\ref{comp2}) is always satisfied. So there is a
family of expanding K\"{a}hler-Ricci solitons.
\end{enumerate}
\begin{rem}
One can apply the same argument in \cite{FIK} to get the
Gromov-Hausdorff convergence and continuation of flow through
singularity time.
\end{rem}
\end{subsection}
\end{section}
\begin{section}{Compact shrinking soliton}
We prove theorem \ref{cptshrink} in this section. First we show the
considered manifold is Fano. For some results on Fano manifolds with
a structure of projective space bundle, see \cite{SW}.
\begin{lem}
If $\lambda=\tau-n\epsilon>0$, then $\mathbb{P}(\mathbb{C}\oplus
L^{\oplus n})$ is Fano.
\end{lem}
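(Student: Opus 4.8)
The plan is to show that the anticanonical class of $X:=\mathbb{P}(\mathbb{C}\oplus L^{\oplus n})$ is ample, so that $X$ is Fano. Write $E=\mathbb{C}\oplus L^{\oplus n}$, let $\pi:X\to M$ be the bundle projection, and let $\zeta=c_1(\mathcal{O}_X(1))$ be the first Chern class of the dual of the tautological subbundle $\mathcal{O}_X(-1)\subset\pi^*E$. First I would compute $-K_X$ from the relative Euler sequence
\[
0\to\mathcal{O}_X\to\pi^*E\otimes\mathcal{O}_X(1)\to T_{X/M}\to0,
\]
which gives $-K_{X/M}=(n+1)\zeta+\pi^*c_1(\det E)$. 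Since $\det E=L^{\otimes n}$ with $c_1(L)=-\epsilon[\omega_M]$, while $c_1(M)=\tau[\omega_M]$ by the Einstein condition, adding $\pi^*c_1(M)$ yields
\[
-K_X=(n+1)\zeta+(\tau-n\epsilon)\,\pi^*[\omega_M].
\]
This is the same adjunction bookkeeping already used to derive $\lambda=\tau-n\epsilon$ near \eqref{valam}, and it is routine.

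Next I would produce a Kähler form in this class. Equip $E$ with the Hermitian metric $1\oplus h^{\oplus n}$; this induces a metric on $\mathcal{O}_X(1)$ whose Chern form $\Theta$ I claim is semipositive and strictly positive on fibers. Indeed, restricting the Chern curvature of $(\pi^*E,\,1\oplus h^{\oplus n})$ to the tautological line and dropping the nonnegative second fundamental form contribution $\gamma\wedge\gamma^*$ gives, in horizontal directions, $\Theta\ge-\langle\Theta_E u,u\rangle/|u|^2=\rho\,\epsilon\,\omega_M\ge0$, where $u$ spans the line, $\rho=|u'|^2/|u|^2\in[0,1]$ is the fraction of $|u|^2$ lying in the $L^{\oplus n}$-part, and we use $c_1(L,h)=-\epsilon\omega_M$ with $\epsilon\ge0$; along the fibers $\gamma\wedge\gamma^*$ is the Fubini--Study form, so $\Theta$ is positive there. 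In other words $\mathcal{O}_X(1)$ is nef and $\pi$-ample, which is where the hypothesis $\epsilon\ge0$ enters.

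I would then set $\Omega=(n+1)\Theta+(\tau-n\epsilon)\pi^*\omega_M$, a closed real $(1,1)$-form representing $-K_X$, and check $\Omega>0$ pointwise. For $z\in\ker d\pi$ the Fubini--Study term makes $\Theta(z,\bar z)>0$; for $z\notin\ker d\pi$ one has $\pi^*\omega_M(z,\bar z)=\omega_M(d\pi\,z,\overline{d\pi\,z})>0$; and since both $\Theta\ge0$ and $\pi^*\omega_M\ge0$ everywhere, $\Omega(z,\bar z)>0$ in either case. Thus $\Omega$ is Kähler and $X$ is Fano.

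The main obstacle is establishing positivity with the \emph{fixed} coefficient $\tau-n\epsilon$ rather than an arbitrarily large multiple of $\pi^*\omega_M$. The difficulty concentrates on the section $M_0=\mathbb{P}(\mathbb{C})\subset X$ (the zero section of $L^{\oplus n}$): there the tautological line lies in the flat summand $\mathbb{C}$, so $\rho=0$ and $\Theta$ degenerates in the horizontal directions, and the horizontal positivity of $\Omega$ along $M_0$ is supplied \emph{solely} by $(\tau-n\epsilon)\pi^*\omega_M$. This is exactly why the Fano condition is $\lambda=\tau-n\epsilon>0$: equivalently, testing $-K_X$ against curves, a fiber line $f$ gives $-K_X\cdot f=n+1>0$ automatically, whereas a curve $C\subset M_0$ (where $\zeta|_{M_0}=0$) gives $-K_X\cdot C=(\tau-n\epsilon)\int_C\omega_M$, positive precisely when $\lambda>0$. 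The only genuinely nontrivial point to verify carefully is the semipositivity of $\Theta$, i.e. the nefness of $\mathcal{O}_X(1)$, for which the seminegativity $c_1(L,h)=-\epsilon\omega_M\le0$ is used.
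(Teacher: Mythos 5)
Your proof is correct, and it rests on the same decomposition as the paper's: the adjunction formula $-K_X=(n+1)\,c_1(\mathcal{O}_X(1))+\pi^*c_1(K_M^{-1}\otimes L^{\otimes n})$, with $\epsilon\ge 0$ responsible for positivity of the $\mathcal{O}_X(1)$ part and $\lambda=\tau-n\epsilon>0$ for the base part. Where you genuinely differ is in how ampleness is concluded. The paper argues numerically: it cites Lazarsfeld for nefness of $\mathcal{O}_X(1)$ and then asserts that, since $\mathcal{O}_X(1)$ and $\pi^*(K_M^{-1}\otimes L^{\otimes n})$ lie on different rays of the nef cone, the sum $K_X^{-1}$ is ample. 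You argue metrically: the Hermitian metric $1\oplus h^{\oplus n}$ on $E$ induces a metric on $\mathcal{O}_X(1)$ whose curvature $\Theta$ is semipositive (because $E$ is Griffiths seminegative when $c_1(L,h)=-\epsilon\omega_M\le 0$) and fiberwise Fubini--Study positive, and the vertical/horizontal dichotomy then shows that $(n+1)\Theta+\lambda\,\pi^*\omega_M$ is a K\"ahler form representing $c_1(X)$. Your route buys three things: it is self-contained (no citation needed for nefness); it produces an explicit K\"ahler representative of $c_1(X)$, consonant with the Calabi ansatz used throughout the paper; and your curve test on the zero section $M_0$ (where $\mathcal{O}_X(1)$ restricts trivially, so $-K_X\cdot C=\lambda\int_C\omega_M$) shows in passing that $\lambda>0$ is also \emph{necessary}, which the paper does not remark. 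It also supplies the justification that the paper's final step elides: ``two nef classes on different rays sum to an ample class'' is not a valid implication in general once the Picard rank of $X$ exceeds two (two boundary classes on distinct rays can sum to a boundary class); what is actually needed is that a nef and $\pi$-ample class plus the pullback of an ample class is ample, and your pointwise positivity check is precisely a proof of this fact in the case at hand. What the paper's route buys, in turn, is brevity and complete independence from curvature formulas and sign conventions.
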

\begin{proof}
Let $E=\mathbb{C}\oplus L^{\oplus n}$ and $X=\mathbb{P}(E)$. We have
the formula for anti-canonical bundle:
\[
K_{X}^{-1}=(n+1)\mathcal{O}(1)+\pi^*(K_M^{-1}+L^{\otimes n})
\]
$\mathcal{O}(1)$ is the relative hyperplane bundle. Since
$c_1(L^{-1})=\epsilon[\omega_M]\ge0$, one can prove $\mathcal{O}(1)$
is nef on $X$ \cite{Laz}. $c_1(K_M^{-1}+L^{\otimes
n})=(\tau-n\epsilon)[\omega_M]>0$, so $K_M^{-1}+L^{\otimes n}$ is an
ample line bundle on $M$. So $\mathcal{O}(1)$ and
$\pi^*(K_M^{-1}+L^{\otimes n})$ are different rays of the cone of
numerically effective divisors in
$Pic(\mathbb{P}(E))=\mathbb{Z}Pic(M)+\mathbb{Z}\mathcal{O}(1)$. So
$K_{X}^{-1}$ is ample, i.e. $X$ is Fano.
\end{proof}

The construction of shrinking soliton was developed for the n=1
case, see \cite{Ca2}, \cite{Koi}, \cite{Cao}, \cite{FIK}. We will
give a simple direct argument under our setting. Note here we will
use Tian-Zhu's theory \cite{TZ} to get the uniqueness of
K\"{a}hler-Ricci soliton.

First we need to know the expression for the metric near infinity.
By change of coordinate
\[
[1,\xi_1,\xi_2,\cdots,\xi_n]=[\eta,1,u_2,\cdots, u_n]
\]
So the coordinate change is given by
\[
\xi_1=\frac{1}{\eta}, \xi_2=\frac{u_2}{\eta}, \cdots,
\xi_n=\frac{u_n}{\eta}\quad \Longleftrightarrow\quad
\eta=\frac{1}{\xi_1}, u_2=\frac{\xi_2}{\xi_1}, \cdots,
u_n=\frac{\xi_n}{\xi_1}
\]
Since
\[
\xi_1\frac{\partial}{\partial\xi_1}=-\eta\frac{\partial}{\partial
\eta}-\sum_{i=2}^nu_\alpha\frac{\partial}{\partial u_\alpha},\quad
\xi_\alpha\frac{\partial}{\partial\xi_\alpha}=u_\alpha\frac{\partial}{\partial
u_\alpha}
\]
So the radial vector
$\sum_{i=\alpha}^n\xi_\alpha\frac{\partial}{\partial\xi_\alpha}=-\eta\frac{\partial}{\partial\eta}$
is a holomorphic vector field on $\mathbb{P}(\mathbb{C}\oplus
L^{\oplus n})$. The dual 1-forms transform into
\[
\nabla\xi^1=-\frac{1}{\eta^2}(d\eta-\eta a^{-1}\partial
a)=-\frac{1}{\eta^2}\omega_0,\quad
\nabla\xi^\alpha=\frac{du_\alpha}{\eta}-\frac{u_\alpha}{\eta^2}(d\eta-\eta
a^{-1}\partial
a)=\frac{du_\alpha}{\eta}-\frac{u_\alpha}{\eta^2}\omega_0
\]
Note that the dual basis for the basis $\{dz_i, \omega_0,
du_\alpha\}$ is
\[
\nabla_{z_i}=\frac{\partial}{\partial z_i}+a^{-1}\frac{\partial
a}{\partial z_i}\eta\frac{\partial}{\partial\eta},\quad
\frac{\partial}{\partial \eta},\quad \frac{\partial}{\partial
u_\alpha}
\]
So making coordinate change,
\begin{eqnarray}\label{omegainf}
\omega&=&(1+\epsilon P_ss)\omega_M+\sum_{\alpha=2}^n\sum_{\beta=2}^na\left(P_s\delta_{\alpha\beta}+P_{ss}a\frac{\bar{u}_\alpha
u_\beta}{|\eta|^2}\right)\left(\frac{1}{\eta}du_\alpha-\frac{u_\alpha}{\eta^2}\omega_0\right)\wedge\left(\frac{1}{\bar{\eta}}d\bar{u}_\beta-\frac{\bar{u}_\beta}{\bar{\eta}^2}\overline{\omega_0}\right)\nonumber\\
&&-\sum_{\beta=2}^n P_{ss}a^2
\frac{u_\beta}{|\eta|^2}\frac{\omega_0}{\eta^2}\wedge
\left(\frac{1}{\bar{\eta}}d\bar{u}_\beta-\frac{\bar{u}_\beta}{\bar{\eta}^2}\overline{\omega_0}\right)
-\sum_{\alpha=2}^nP_{ss}a^2\frac{\bar{u}_\alpha}{|\eta|^2}\left(\frac{1}{\eta}du_\alpha-\frac{u_\alpha}{\eta^2}\omega_0\right)\wedge\frac{\overline{\omega_0}}{\bar{\eta}^2}\nonumber\\
&&+
a(P_s+P_{ss}a\frac{1}{|\eta|^2})\frac{\omega_0\wedge\overline{\omega_0}}{|\eta|^4}\nonumber\\
&=&(1+\epsilon P_ss)\omega_M+\sum_{\alpha=2}^n\sum_{\beta=2}^n(P_s\delta_{\alpha\beta}+P_{ss}s\frac{\bar{u}_\alpha
u_\beta}{1+|u|^2})\frac{s}{1+|u|^2}du_\alpha\wedge
d\bar{u}_\beta\nonumber\\
&&-\sum_{\alpha=2}^n\frac{\bar{u}_\alpha\eta}{a(1+|u|^2)^2}(P_s+P_{ss}s)s^2
du_\alpha\wedge\overline{\omega_0}-\sum_{\beta=2}^n\frac{u_\beta\bar{\eta}}{a(1+|u|^2)^2}(P_s+P_{ss}s)s^2\omega_0\wedge d\bar{u}_\beta\\
&&+\frac{1}{a(1+|u|^2)}(P_s+P_{ss}s)s^2\omega_0\wedge\overline{\omega_0}\nonumber
\end{eqnarray}
In the above calculation, we used many times the relation
$s=a|\xi|^2=\frac{a(1+|u|^2)}{|\eta|^2}$.
\begin{lem}
The closing condition for compact shrinking soliton is: there exists
a $b_1>0$, such that
\begin{equation}\label{cptcond}
F(b_1)=0, F'(b_1)=-1
\end{equation}
\end{lem}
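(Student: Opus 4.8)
The plan is to determine exactly when the metric \eqref{omegainf}, written in the chart $[\eta,1,u_2,\dots,u_n]$ that covers the divisor at infinity $D_\infty=\{\eta=0\}$, extends to a smooth nondegenerate K\"{a}hler metric across $\eta=0$. Since $s=a(1+|u|^2)/|\eta|^2$, approaching $D_\infty$ means $s\to+\infty$, i.e. $r=\log s\to+\infty$ and $\phi\to b_1$ for some limiting value $b_1>0$; concretely $r=-\log|\eta|^2+g(z,u)$ with $g$ smooth. First I would isolate the block of \eqref{omegainf} normal to $D_\infty$, namely the coefficient of $\omega_0\wedge\overline{\omega_0}$. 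Using $(P_s+P_{ss}s)s=\phi_r=F(\phi)$ together with $s=a(1+|u|^2)/|\eta|^2$, this coefficient simplifies to $F(\phi)/|\eta|^2$, and near $\eta=0$ one has $\omega_0\approx d\eta$, so this single coefficient governs whether the normal direction closes up smoothly.

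The heart of the argument is the asymptotic analysis of $F(\phi)/|\eta|^2$ as $\eta\to0$. If $F(b_1)\neq0$ the coefficient blows up, so necessarily $F(b_1)=0$. Linearizing, $F(\phi)\approx F'(b_1)(\phi-b_1)$; solving the defining ODE $\phi_r=F(\phi)$ then gives $\phi-b_1\sim A\,e^{F'(b_1)r}$, and substituting $r=-\log|\eta|^2+g$ yields $\phi-b_1\sim |\eta|^{-2F'(b_1)}$. Hence
\[
\frac{F(\phi)}{|\eta|^2}\sim |\eta|^{-2(F'(b_1)+1)},
\]
and a finite positive limit forces $F'(b_1)+1=0$, i.e. $F'(b_1)=-1$. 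This also makes $F'(b_1)<0$, so that $\phi\nearrow b_1$ as $r\to+\infty$, consistent with $F>0$ on $(0,b_1)$, and makes the fibrewise distance $\int^{b_1}F^{-1/2}d\phi$ finite, as it must be for a smooth divisor at finite distance. This establishes the necessity of \eqref{cptcond}.

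For sufficiency I would feed $F(b_1)=0$, $F'(b_1)=-1$ back into \eqref{omegainf} block by block. Writing $P_ss=\phi$ and $P_{ss}s^2=F(\phi)-\phi$, the base-plus-fibre block becomes $\tfrac{1}{1+|u|^2}\big(\phi\,\delta_{\alpha\beta}+(F(\phi)-\phi)\tfrac{\bar u_\alpha u_\beta}{1+|u|^2}\big)$ in the $du_\alpha\wedge d\bar u_\beta$ directions, together with $(1+\epsilon\phi)\omega_M$; as $\eta\to0$ this tends to $(1+\epsilon b_1)\omega_M+b_1\,\omega_{FS}$, a smooth K\"{a}hler metric on $D_\infty\cong M\times\mathbb{P}^{n-1}$. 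The cross terms carry a factor $F(\phi)/\bar\eta\sim\eta$, hence vanish smoothly on $D_\infty$, and by the previous paragraph the normal block tends to a positive constant times $d\eta\wedge d\bar\eta$. Thus \eqref{cptcond} is exactly the condition under which \eqref{omegainf} defines a smooth metric on the compactification.

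The main obstacle I anticipate is pinning down the precise exponent $F'(b_1)=-1$ rather than merely $F(b_1)=0$: one must track carefully how the condition at $r=+\infty$ is converted, through the linearized ODE and the logarithmic relation $r=-\log|\eta|^2+g$, into an honest power of $|\eta|$, and then check that this power makes the normal direction close up like the smooth origin of a complex disk. This is the reflection, at the infinity divisor, of the $F'(0)=1$ smoothness condition established at the zero section in Section 4, with the sign reversal traceable to the coordinate inversion $\eta=1/\xi_1$.
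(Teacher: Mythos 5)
Your proposal is correct and follows essentially the same route as the paper: work in the chart \eqref{omegainf} covering $D_\infty$, use the linearization of $F$ at $b_1$ to get $\phi-b_1\sim -C_0\tilde{s}$ with $\tilde{s}=s^{-1}$ a smooth local defining function for the divisor, and then check block by block that the coefficients extend smoothly and the limiting form $(1+\epsilon b_1)\omega_M+b_1\omega_{FS}+(\mbox{normal part})$ is positive. The one genuine addition on your side is the necessity direction: the paper only verifies that \eqref{cptcond} is sufficient (its expansion $\phi-b_1\sim-C_0e^{-r}$ already presupposes $F'(b_1)=-1$), whereas your clean identification of the normal coefficient as exactly $F(\phi)/|\eta|^2\sim|\eta|^{-2(F'(b_1)+1)}$ explains why no other value of $F'(b_1)$ can close the metric up, which better matches the lemma's ``if and only if'' phrasing.
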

\begin{proof}
Define $\tilde{s}=s^{-1}=\frac{|\eta|^2}{a(1+|u|^2)}$. Under the
condition \eqref{cptcond}, then near $b_1$,
$\phi_r=F(\phi)=-(\phi-b_1)+O((\phi-b_1)^2)$. So up to the main term,
$\phi-b_1\sim -C_0e^{-r}=-C_0\frac{1}{s}=-C_0\tilde{s}$ for some
$C_0>0$, $(P_ss)_ss^2=\phi_ss^2\sim C_0$,
$P_{ss}s^2=(P_ss)_ss-P_ss=\phi_ss-\phi\sim
-b_1+\frac{2C_0}{s}=-b_1+2C_0\tilde{s}$, . So we first see that the
coefficients in \eqref{omegainf} are smooth near infinity divisor
defined by $\eta=0$ (or equivalently $\tilde{s}=0$). We only need to
show $\omega$ is positive definite everywhere. In fact, we only need
to check when $\tilde{s}=0$. When $\tilde{s}=0$, we have
\begin{equation}\label{infexp}
\omega=(1+\epsilon b_1)\omega_M+\sum_{\alpha=2}^n\sum_{\beta=2}^n(b_1\delta_{\alpha\beta}-b_1\frac{\bar{u}_\alpha
u_\beta}{1+|u|^2})\frac{1}{1+|u|^2}du_\alpha\wedge
d\bar{u}_\beta+\frac{C_0}{a(1+|u|^2)}\omega_0\wedge\overline{\omega_0}
\end{equation}
So $\omega$ is positive definite. So it defines a smooth K\"{a}hler
metric on the projective compactification.
\end{proof}
By \ref{ode}, this condition determines
\begin{equation}\label{b1}
b_1=\frac{n+1}{\tau-n\epsilon}=\frac{n+1}{\lambda}
\end{equation}
In fact, $b_1$ is a cohomological constant. Indeed.
\[
[\omega]=\frac{1}{\lambda}c_1(X)=[\omega_M]+\frac{n+1}{\lambda}\mathcal{O}_X(1)
\]
Note that $D_\infty=M\times \mathbb{P}^{n}$, and 
\[
[\omega]|_{D_\infty}=[\omega_M]+\frac{n+1}{\lambda}(-L+\mathcal{O}_{\mathbb{P}^{n}}(1))
\]
On the otherhand, by \eqref{infexp}, we see that
\[
[\omega]|_{D_\infty}=(1+\epsilon b_1)[\omega_M]+C_0[\mathcal{O}_{\mathbb{P}^{n}}(1)]
\]
Comparing the above expression and using $c_1(L)=-\epsilon[\omega_M]$, we get \eqref{b1}.

Since $F(0)=0$, this condition is equivalent to
\begin{equation}\label{cpteq}
0=F(b_1)-F(0)=\int_0^{b_1}h(\phi)e^{-\mu\phi}d\phi=T(\mu)
\end{equation}
\begin{eqnarray*}
T(0)&=&\int_0^{b_1}h(\phi)d\phi=\int_0^{b_1}(1+\epsilon\phi)^d\phi^{n-1}((\tau-n\epsilon)\phi-1)\\
&=&\int_0^{b_1}\sum_{k=0}^d\binom{d}{k}\epsilon^k(\lambda\phi^{k+n}-\phi^{k+n-1})d\phi\\
&=&\sum_{k=0}^d\binom{d}{k}\epsilon^k
b_1^{k+n}\left(\frac{\lambda b_1}{k+n+1}-\frac{n}{k+n}\right)\\
&=&\sum_{k=0}^d\binom{d}{k}\epsilon^kb_1^{k+n}\frac{k}{(k+n+1)(k+n)}>0
\end{eqnarray*}
On the other hand,
\[
T(\mu)=\frac{1}{\mu^{d+n+1}}\sum_{k=0}^{d+n}\mu^{d+n-k}(h^{(k)}(0)-h^{(k)}(b_1)e^{-\mu
b_1})
\]
Since $h^{(i)}(0)=0$ for $0\le i\le n-2$ and $h^{(n-1)}(0)=-n!<0$, and
$\lim_{\mu\rightarrow+\infty}e^{-\mu b_1}=0$. It's easy to see that
$T(\mu)<0$ for $\mu$ sufficiently large. So there is a zero point
for $T(\mu)$ on $(0,\infty)$. The uniqueness is difficult to see
directly, but because different solutions of \eqref{cpteq} would
give proportional vector fields and hence proportional potential
functions, by using Tian-Zhu's invariant \cite{TZ}, we indeed have
the uniqueness.
\end{section}

Department of Mathematics, Princeton University, Princeton, NJ
08544, USA

E-mail address: chil@math.princeton.edu
\end{document}